\newtheorem{theorem}{Theorem}
\DeclareMathOperator{\Var}{Var}
\DeclareMathOperator{\osc}{osc}
\begin{document}

\title{Discrete operators associated with linking operators}  
\author{Margareta Heilmann, Fadel Nasaireh, Ioan Ra\c{s}a }        

\maketitle

\begin{abstract}
We associate to an integral operator a discrete one which is 
conceptually simpler, and study the relations between them.
\end{abstract}
{\bf Keywords:} {Linking operators, Baskakov-Durrmeyer type operators, Kantorovich modifications of operators}
\\
{\bf MSC 2010:} {41A36 \and 41A35 \and 41A28}

\section{Introduction and notation}
\label{sec1}
It is well-known that integral operators are very useful in 
Approximation Theory. However, sometimes they are expressed in terms of 
complicated integrals with respect to complicated measures. In this 
paper we associate to an integral operator a discrete one which is 
conceptually simpler, and study the relations between them. Some results 
in this direction can be found also in \cite{Rasa2011}.

After introducing the necessary definitions we present some general 
results. Then we recall the definitions of Baskakov type operators, 
genuine Baskakov-Durrmeyer type operators, and their Kantorovich 
modifications. We construct the discrete operators associated with these 
integral operators and apply our general results in this context.

\section{Preliminaries}
\label{sec2}
Let $I \subset \mathbb{R}$ be an interval and $H$ a subspace of $C(I)$ containing $e_0$, $e_1$ and $e_2$. Let $L: H \longrightarrow C(I)$ be a positive linear operator such that $Le_0=e_0$. The second moment of $L$ is defined by
$$
	M_2 L(x) = L(e_1-xe_0)^2 (x),  \, x \in I.
$$
For a fixed $x \in I$ consider the functional $H \ni f \longrightarrow Lf(x)$ and define
$\Var_x L := Le_2 (x) -(Le_1 (x))^2$; then, roughly speaking, $\Var_x L$ shows how far is the functional from being a pointwise evaluation.

It is easy to verify that
\begin{equation}
\label{eq2-1}
	M_2L(x)-\Var_xL = (Le_1(x)-x)^2 .
\end{equation}
Now let $L$ be of the form
\begin{equation}
\label{eq2-2}
	Lf: = \sum_{j=0}^\infty A_j (f) p_j, \, f \in H,
\end{equation}
where $A_j : H \longrightarrow \mathbb{R}$ are positive linear functionals,
\begin{equation}
\label{eq-2a}
A_j(e_0) = 1 \mbox{ and } p_j \in C(I), \,  p_j \geq 0, \,  \sum_{j=0}^\infty p_j =e_0 .
\end{equation}
Let 
\begin{equation}
\label{eq2-3}
	b_j :=A_j(e_1), \, \Var A_j :=A_j(e_2)-b_j^2, \, j \geq 0.     
\end{equation}
Then, generally speaking, $\Var A_j$ shows how far is $A_j$ from the point evaluation at $b_j$.

The discrete operator associated with $L$ is defined by 
\begin{equation}
\label{eq2-4}
		D: H \longrightarrow C(I), \, Df :=\sum_{j=0}^\infty f(b_j) p_j.
\end{equation}
The point evaluation functional at $b_j$ is conceptually simpler than $A_j$;
from this point of view, $D$ is simpler than $L$.
We shall investigate the relations between $L$ and $D$.

It is easy to verify that
\begin{equation}
\label{eq2-5}
	M_2 D(x) = \sum_{j=0}^\infty (b_j-x)^2 p_j(x), \, x \in I.  
\end{equation}
Moreover, according to (\ref{eq2-1}) and (\ref{eq-2a})
\begin{eqnarray*}
	M_2L(x)- \Var_x L
		& = &
		(Le_1(x)-x)^2
		\\
		& = &
		\left ( \sum_{j=0}^\infty (b_j-x) p_j(x) \right )^2
		\\
		& = &
		\left ( \sum_{j=0}^\infty (b_j-x) \sqrt{p_j(x)}  \sqrt{p_j(x)} \right )^2
		\\
		& \leq &
		\sum_{j=0}^\infty (b_j-x)^2 p_j(x) .
\end{eqnarray*}
Combined with (\ref{eq2-5}), this shows that 
\begin{equation}
\label{eq2-6}
	0 \leq M_2L(x) -\Var_x L \leq M_2 D(x), \, x \in I.
\end{equation}

Now define
\begin{equation}
\label{eq2-7}
	E(L)(x) := \sum_{j=0}^\infty (\Var{A_j} ) p_j (x), \, x \in I.
\end{equation}
We have
\begin{eqnarray*}
	M_2L(x) 
	& = &
	L(e_1-xe_0)^2 (x)
	\\
	& = &
	\sum_{j=0}^\infty \left ( A_j(e_2)-2xb_j+x^2 \right )p_j(x)
	\\
	& = &
	\sum_{j=0}^\infty \left ( A_j(e_2)-b_j^2 \right )p_j(x) + \sum_{j=0}^\infty (b_j-x)^2p_j(x) .
\end{eqnarray*}
With (\ref{eq2-3}) and (\ref{eq2-5}) this leads to
\begin{equation}
\label{eq2-8}
	M_2L(x) =E(L) (x)+M_2D(x), \, x \in I.
\end{equation}
Combined with (\ref{eq2-6}), this yields
\begin{equation}
\label{eq2-9}
	E(L)(x) \leq \Var_x{L}, \, x \in I.
\end{equation}
Finally, let $f \in H \cap C^2(I)$ and suppose that $\|f''\|_\infty < \infty$. Then by Taylor's formula,
$$
	|f(t)-f(b_j)-(t-b_j)f'(b_j) | \leq \frac{1}{2} (t-b_j)^2 \|f''\|_\infty , \, t \in I.
$$
This entails
\begin{equation}
\label{eq2-10}
	|A_j(f) - f(b_j) | \leq \frac{1}{2} (\Var{A_j}) \|f''\|_\infty .
\end{equation}
Moreover, according to (\ref{eq2-2}) and (\ref{eq2-4}),
$$
	|Lf-Df| \leq \sum_{j=0}^\infty |A_j(f)-f(b_j) | p_j,
$$
and so 
$$
	|Lf-Df| \leq \frac{1}{2}  \|f''\|_\infty \sum_{j=0}^\infty (\Var{A_j}) p_j.
$$
We conclude that
\begin{equation}
\label{eq2-11}
	|Lf(x)-Df(x) | \leq \frac{1}{2}  \|f''\|_\infty E(L)(x), \, x \in I.
\end{equation}
Using (\ref{eq2-11}) we see that $E(L)(x)$ shows how far is $L$ from $D$.

\section{Linking operators and discrete operators}
In \cite{Paltanea2007, Paltanea2008}  P\u {a}lt\u {a}nea introduced operators depending on a parameter $\rho \in \mathbb{R}^+ $, which constitute a non-trivial link between the Bernstein and Sz\'{a}sz-Mirakjan operators, respectively, and their genuine Durrmeyer modifications. In \cite{HeilmannRasa2015} this definition was extended to 
 a non-trivial link between Baskakov type operators and genuine Baskakov-Durrmeyer type operators and their $k$-th order Kantorovich modification.
 For $k=1$ this means a link between the Kantorovich modification of Baskakov type and Baskakov-Durrmeyer type operators.

In what follows for $c \in \mathbb{R} $ we use the notations
$$
	a^{c,\overline{j}} := \prod_{l=0}^{j-1}  (a+cl) , \; a^{c,\underline{j}} := \prod_{l=0}^{j-1}  (a-cl) , \; j \in \mathbb{N}; \quad
	 a^{c,\overline{0}}= a^{c,\underline{0}} :=1 .
$$
 
In the following definitions of the operators we omit the parameter $c$ in the notations in order to reduce the necessary sub and superscripts.

Let $c \in \mathbb{R}$, $n \in \mathbb{R}$, $n > c$ for $c\geq0$ and $-n/c \in \mathbb{N}$ for $c<0$. Furthermore let  $\rho \in \mathbb{R}^+$, $j \in \mathbb{N}_0$, $x \in I_c$ with $I_c = [0,\infty)$ for $c\geq0$ and $I_c=[0,-1/c]$ for $c < 0$. Then the basis functions are given by
$$
	p_{n,j}(x) = \left \{ \begin{array}{cl}
	\frac{n^j}{j!} x^j  e^{-nx} &, \, c = 0 ,\\
	\frac{n^{c,\overline{j}}}{j!} x^j (1+cx)^{-\left ( \frac{n}{c}+j\right)} &, \, c > 0 .
 	\end{array} \right .
$$
In the following definitions we assume that the function $f$ is given in such a way that the corresponding integrals and series are convergent.
The operators of Baskakov-type are defined by
\begin{equation}
\label{eq0.1a}
	(B_{n,\infty} f)(x) = \sum_{j=0}^{\infty} p_{n,j}(x) f \left ( \frac{j}{n} \right ) ,
\end{equation}
and 
the genuine Baskakov-Durrmeyer type operators are denoted  by 
\begin{eqnarray}
\label{eq0.2}
	 (B_{n,1} f)(x)
	& = & 
	f(0) p_{n,0}(x)  + f\left( -\frac{1}{c} \right ) p_{n,-\frac{n}{c}} (x)
\\
\nonumber
	& & 
	+ \sum_{j=1}^{-\frac{n}{c}-1} p_{n,j}(x) (n+c) \int_0^{-\frac{1}{c}} p_{n+2c,j-1} (t) f (t) dt 
\end{eqnarray}
for $c<0$ and by
\begin{eqnarray*}
	 (B_{n,1} f)(x)
	& = & 
	f(0) p_{n,0}(x)  +
	\sum_{j=1}^{\infty} p_{n,j}(x) (n+c) \int_0^{\infty} p_{n+2c,j-1} (t) f (t) dt
\end{eqnarray*}
for $c \geq 0$.
\\
Depending on a parameter  $\rho \in \mathbb{R}^+$ the linking operators are given by
\begin{equation}
\label{eq0.3}
	 (B_{n,\rho} f)(x) = \sum_{j=0}^{\infty} F_{n,j}^\rho (f) p_{n,j} (x)
\end{equation}
where 
$$
	F_{n,j}^\rho (f) = \left \{
	\begin{array}{cll}
	f(0) & , & j=0, \, c \in \mathbb{R},
	\\
	\displaystyle f \left (-\frac{1}{c} \right ) & , & j=-\frac{n}{c} , \, c <0,
	\\
	\displaystyle \int_{I_c} \mu_{n,j}^{\rho} (t) f(t) dt & , & \mbox{otherwise},
	\end{array} \right .
$$
with
$$
	\mu_{n,j}^{\rho} (t) = \left \{ \begin{array}{cl}
	\displaystyle \frac{(-c)^{j\rho}}{B \left (j\rho,- \left (\frac{n}{c}+j \right )\rho \right )} t^{j\rho -1} (1+ct)^{-\left ( \frac{n}{c}+j\right)\rho -1} &, \, c < 0 , \\
	\displaystyle \frac{(n\rho)^{j\rho}}{\Gamma (j \rho )} t^{j\rho-1}  e^{-n\rho t} &, \, c = 0  ,\\
	\displaystyle \frac{c^{j\rho}}{B \left (j\rho,\frac{n}{c}\rho+1 \right )} t^{j\rho -1} (1+ct)^{-\left ( \frac{n}{c}+j\right)\rho -1} &, \, c > 0 .
	\end{array} \right .
$$
By $B(x,y) = \int_0^1 t^{x-1} (1-t)^{y-1} dt$, $x,y > 0$, we denote Euler's Beta function.

Note that in case $c<0$ the sums in (\ref{eq0.1a}) and (\ref{eq0.3}) are finite, as $p_{n,j}(x) = 0$ for $j>-n/c$.
The $k$-th order Kantorovich modification of the operators $B_{n,\rho}$ are defined by
$$
	B_{n,\rho}^{(k)}:=D^k \circ B_{n,\rho}\circ I_k
$$
where
$D^k$ denotes the $k$-th order ordinary differential operator and
$$
	I_k f = f, \mbox{ if } k=0, \mbox{ and } (I_k f)(x) = \int_0^x \frac{(x-t)^{k-1}}{(k-1)!}  f(t) dt,
	\mbox{ if } k \in \mathbb{N}.
$$
For $k=0$ we omit the superscript $(k)$ as indicated by the definition above.

We recall some results concerning $\lim_{\rho \to \infty } B_{n,\rho}^{(k)}$.

In \cite[Theorem 2.3]{GonskaPaltanea2010-1} Gonska and P\u {a}lt\u {a}nea proved for $c=-1$ the convergence of the operators $B_{n,\rho}$ to the classical Bernstein operator $B_{n,\infty}$, i.e., they proved that for every
$f \in C[0,1]$
$$
	\lim_{\rho \to \infty}{B_{n,\rho}} f = B_{n,\infty} f \mbox{ uniformly on }  [0,1].
$$
From \cite{HeilmannRasa2014} (see the consideration of the special case $\rho \to \infty$ after Remark 2 there) we know that for each polynomial $q$
$$
	\lim_{\rho \to \infty}B_{n,\rho}^{(k)} q = B_{n,\infty}^{(k)} q \mbox{ uniformly on }  [0,1].
$$
Let $ \varepsilon > 0$ be arbitrary. As the space of polynomials $ \mathcal{P}$ is dense in 
$ L_p[0,1]$, $\| \cdot \|_p $, $1 \leq p < \infty$ and $C[0,1]$, $\| \cdot \|_\infty$, $p=\infty$, we can choose a polynomial $q$, such that $\|f-q\|_p < \varepsilon$. Then
\begin{eqnarray*}
	\| (B_{n,\rho}^{(k)} - B_{n,\infty}^{(k)}) f \|_p
	& \leq &
	\| B_{n,\rho}^{(k)} (f - q) \|_p
	+\| B_{n,\infty}^{(k)} (f - q)  \|_p
	+\| (B_{n,\rho}^{(k)}  - B_{n,\infty}^{(k)} ) q\|_p .
\end{eqnarray*}
As the operators $B_{n,\rho}^{(k)} $ and $B_{n,\infty}^{(k)}$ are bounded (see \cite[Corollary 1]{HeilmannRasa2014} and \cite[(3)]{GonskaHeilmannRasa2011} for the images of $e_0=1$) we immediately get
\begin{equation}
\label{eq00}
	\lim_{\rho \to \infty}
	\|(B_{n,\rho}^{(k)} - B_{n,\infty}^{(k)}) f \|_p = 0
\end{equation}
for each $ f \in  L_p[0,1]$, $\| \cdot \|_p $, $1 \leq p < \infty$ and $C[0,1]$, $\| \cdot \|_\infty$, $p=\infty$.

We will use the following representations for the linking operators (see \cite[Theorem 2]{HeilmannRasa2017} and \cite[Theorem 4]{HeilmannRasa2018} in case $\rho \in \mathbb{N}$.

$c=-1$: Let $n,k \in \mathbb{N}$, $n-k \geq 1$, $\rho \in \mathbb{N}$ and $f \in L_1[0,1]$. Then we have the representation
\begin{eqnarray}
\label{eq15a}
	{B_{n,\rho}^{(k)} (f;x)}
	& = &
	\frac{n! (n\rho -1)!}{(n-k)! (n\rho+k-2)!} \sum_{j=0}^{n-k} p_{n-k,j} (x)
\\
\nonumber
	& &
	\times 
	\int_0^1
	\sum_{i_1=0}^{\rho-1} \dots \sum_{i_k=0}^{\rho-1}
	p_{n\rho+k-2,j\rho +i_1 + \dots +i_k+k-1} (t)   f(t) dt .
\end{eqnarray}

$c \geq 0$: Let $n,k \in \mathbb{N}$, $n-k \geq 1$, $\rho \in \mathbb{N}$ and $f \in W_n^\rho$. Then we have the representation
\begin{eqnarray}
\label{eq15b}
	{B_{n,\rho}^{(k)} (f;x)}
	& = &
	\frac{n^{c,\overline{k}}}{(n\rho)^{c,\underline{k-1}}} \sum_{j=0}^{\infty} p_{n+kc,j} (x)
\\
\nonumber
	& &
	\times 
	\int_0^\infty
	\sum_{i_1=0}^{\rho-1} \dots \sum_{i_k=0}^{\rho-1}
	p_{n\rho-c(k-2),j\rho +i_1 + \dots +i_k+k-1} (t)   f(t) dt .
\end{eqnarray}
For the images of monomials explicit representations are known from \cite[Theorem 2]{HeilmannRasa2014} and \cite[Theorem 2]{HeilmannRasa2015}. 
We will need (see \cite[Corollary 1]{HeilmannRasa2014} and \cite[Corollary 2]{HeilmannRasa2015} the images of the first monomials.
\begin{eqnarray}
\label{moexeq4}
	(B_{n,\rho}^{(k)} e_0 ) (x)
	& = &
	\frac{\rho^k}{(n\rho)^{c,\underline{k}}} \cdot n^{c,\overline{k}},
	\\
	\nonumber
	(B_{n,\rho}^{(k)}e_1 ) (x)
	& = &
	\frac{\rho^{k+1}}{(n\rho)^{c,\underline{k+1}}} \cdot n^{c,\overline{k}} \left [ \frac{1}{2}k\left ( 1 + \frac{1}{\rho}\right ) + (n+ck) x \right ],
		\\
		\nonumber
	(B_{n,\rho}^{(k)}e_2) (x)
	& = &
	\frac{\rho^{k+2}}{(n\rho)^{c,\underline{k+2}}} \cdot n^{c,\overline{k}} 
	\left [ \frac{1}{2}k \left ( \frac{3k+1}{6} + \frac{k+1}{ \rho} + \frac{3k+5}{6 \rho^2} \right ) \right .
	\\
	\nonumber
	& &
	\quad \left .+ (n+ck) \left ( (k+1) \left ( 1 + \frac{1}{\rho}\right ) x  + (n+c(k+1)) x^2 \right ) \right ].
\end{eqnarray}
%

%


We consider the operators
\begin{equation}
\label{eq3-1}
	V_{n,\rho}^{(k)} := \frac{(n\rho)^{c,\underline{k}}}{\rho^k n^{c,\overline{k}}} B_{n,\rho}^{(k)}
\end{equation}
for which we have $V_{n,\rho}^{(k)} e_0 = e_0$. 

They are of the form (\ref{eq2-2}); more precisely,
\begin{equation}
\label{eq3-2}
	V_{n,\rho}^{(k)}f: = \sum_{j=0}^\infty A_{n,\rho,j}^{(k)} (f) p_{n+kc,j}, 
\end{equation}
where (see (\ref{eq15a}), (\ref{eq15b}) and (\ref{eq3-1}))
\begin{eqnarray}
\label{eq3-3}
	\lefteqn{A_{n,\rho,j}^{(k)} (f) }
	\\
	\nonumber
	&:=&
	\frac{n\rho-(k-1)c}{\rho^k} \sum_{i_1=0}^{\rho-1} \dots \sum_{i_k=0}^{\rho-1}
	\int_0^\infty p_{n\rho-(k-2)c,j\rho+i_1+ \dots + i_k+k-1} (t) f(t) dt .
\end{eqnarray}
If $n\rho > kc$, we can consider the barycenter of $A_{n,\rho,j}^{(k)}$.
As $t p_{m+c,l-1}(t) = \frac{l}{m} p_{m,l}(t)$ and $ \int_0^\infty p_{m,l}(t) dt = \frac{1}{m+1}$
we can calculate
\begin{eqnarray}
\label{eq3-4}
	\lefteqn{b_{n,\rho,j}^{(k)}:= A_{n,\rho,j}^{(k)} (e_1) }
\\
	\nonumber
	& = & \frac{n\rho-kc+c}{\rho^k} 
	\sum_{i_1=0}^{\rho-1} \dots \sum_{i_k=0}^{\rho-1} \int_0^\infty
	p_{n\rho-c(k-2),j\rho +i_1 + \dots +i_k+k-1} (t)   t dt 
\\
\nonumber
& = &  \frac{n\rho-kc+c}{\rho^k} 
	\sum_{i_1=0}^{\rho-1} \dots \sum_{i_k=0}^{\rho-1} \frac{j\rho+i_1+ \dots i_k +k}{n\rho-ck+c}
	\cdot \frac{1}{n\rho-ck}
\\
\nonumber
	& = & \frac{1}{\rho^k(n\rho-ck)} 
	\left \{ \sum_{i_1=0}^{\rho-1} \dots \sum_{i_k=0}^{\rho-1} (j\rho+k)
		+ \sum_{i_1=0}^{\rho-1} \dots \sum_{i_k=0}^{\rho-1} (i_1+ \dots i_k ) \right \}
\\
\nonumber
	& = & \frac{1}{\rho^k(n\rho-ck)} \left \{ (j\rho+k)\rho^k +\rho^{k-1} \cdot \frac{(\rho-1)\rho}{2}\cdot k \right \}
\\
\nonumber
	& = & \frac{(2j+k)\rho+k}{2(n\rho-kc)} .
\end{eqnarray}
The discrete operators (\ref{eq2-4}) associated with $V_{n,\rho}^{(k)}$ are
\begin{equation}
\label{eq3-5}
	D_{n,\rho}^{(k)} f := \sum_{j=0}^\infty f(b_{n,\rho,j}^{(k)}) p_{n+kc,j} .
\end{equation}
Under the form
$$
	D_{n,\rho}^{(k)} f  = \sum_{j=0}^\infty f \left ( \frac{j+\frac{(\rho+1)k}{2\rho}}{(n+kc)-\frac{\rho+1}{\rho}kc} \right ) p_{n+kc,j},
$$
we see that $D_{n,\rho}^{(k)}$ is a Stancu-type modification of the operator (\ref{eq0.1a})
\begin{equation}
\label{eq3-6}
B_{n+kc,\infty} f = \sum_{j=0}^\infty f \left ( \frac{j}{n+kc}  \right ) p_{n+kc,j}.
\end{equation}
A direct calculation similar to (\ref{eq3-4}) shows that
\begin{eqnarray}
\label{eq3-7}
	\lefteqn{A_{n,\rho,j}^{(k)} (e_2) }
	\\
	\nonumber
	&:=&
	\frac{12(j\rho+k)(j\rho+k+1)+4k[(3j+1)\rho+3k+1](\rho-1)+3k(k-1)(\rho-1)^2}{12(n\rho-kc)(n\rho-kc-c)} .
\end{eqnarray}
Now (\ref{eq2-3}), (\ref{eq3-4}) and (\ref{eq3-7}) imply
\begin{eqnarray}
\label{eq3-8}
	\lefteqn{\Var{A_{n,\rho,j}^{(k)}}  }
	\\
	\nonumber
	&:=&
	\frac{kn\rho^3+6[k(2jc+n)+2j(jc+n)]\rho^2+5kn\rho+2k^2c(\rho^2-1)}{12(n\rho-kc)^2(n\rho-kc-c)} .
\end{eqnarray}
With the notation $E_{n,\rho}^{(k)}:= E \left ( V_{n,\rho}^{(k)} \right )$ and using (\ref{eq2-7}) and (\ref{eq3-8}), we get
\begin{eqnarray}
\label{eq3-9}
	\lefteqn{E_{n,\rho}^{(k)} (x)}
	\\
	\nonumber
	&:=&
	\frac{kn\rho^3+[12x(1+cx)(kc+n)(kc+n+c)+6kn]\rho^2+5kn\rho+2k^2c(\rho^2-1)}{12(n\rho-kc)^2(n\rho-kc-c)} .
\end{eqnarray}
Here are some particular values:	
		\begin{align}
		\label{eq3-a}
			\Var{A_{n,\infty ,j}^{(k)}} &= \frac{k}{12n^2} 
			, &\! \! \!
			\Var{A_{n,1 ,j}^{(k)}} &= \frac{(j+k)(n+jc)}{(n-kc)^2(n-kc-c)}
			\\	
		\label{eq3-b}
			\mbox{and } E_{n,\infty}^{(k)}(x) &=\frac{k}{12n^2}
			, & \! \! \!
			E_{n,1}^{(k)} (x) &=\frac{x(1+cx)(n+kc)(n+(k+1)c)+kn}{(n-kc)^2(n-(k+1)c)}
		\end{align}
For $c=0$, 
		\begin{align}
		\label{eq-c}
			\Var{A_{n,\rho ,j}^{(k)}} &= \frac{k\rho^2+6(k+2j)\rho+5k}{12n^2\rho^2}
			,& E _{n,\rho}^{(k)}  =  \frac{12n\rho  x+k(\rho+1)(\rho+5)}{12n^2 \rho^2} .
		\end{align}
The images of the first monomials under $V_{n,\rho}^{(k)} $ can be deduced from (\ref{moexeq4}); using them we find
\begin{eqnarray}
	&& Var_x{V_{n,\rho}^{(k)}} := V_{n,\rho}^{(k)} e_2 (x) -\left ( V_{n,\rho}^{(k)} e_1 (x) \right )^2
	\\
	& = &
	\nonumber
	\frac{12nx(1+cx)(n+kc)\rho^2(\rho+1)+2k(3n+kc)\rho^2+kn\rho(\rho^2+5)-2k^2c}{12(n\rho-kc)^2(n\rho-kc-c)} .
\end{eqnarray}
As explained in Section \ref{sec2}, $\Var_x{V_{n,\rho}^{(k)}}$ shows how far is the functional 
$f \mapsto  V_{n,\rho}^{(k)} f(x)$ from the point evaluation at its barycenter 
$\frac{2(n+kc)\rho x +k(\rho +1)}{2(n\rho -kc)}$.

This assertion, and other similar ones, are made more precise in the following Theorem.
\begin{theorem}
\label{thm3-1}
Let $\|f''\|_\infty < \infty$. Then
\begin{eqnarray}
\label{eq3-10}
	\left | A_{n,\rho ,j}^{(k)} (f) - f \left ( \frac{(2j+k)\rho +k}{2(n\rho-kc)} \right ) \right |
	&\leq &
	\frac{1}{2} \|f''\|_\infty \Var{A_{n,\rho ,j}^{(k)}} ,
	\\
\label{eq3-11}
	\left | V_{n,\rho}^{(k)} f(x) - D_{n,\rho}^{(k)} f(x) \right |
	&\leq &
	\frac{1}{2} E_{n,\rho}^{(k)} (x) \|f''\|_\infty
	\\
\label{eq3-12}
	\left | V_{n,\rho}^{(k)} f(x) - f \left ( \frac{2(n+kc)\rho x+k(\rho+1)}{2(n\rho-kc)} \right ) \right |
	& \leq &
	\frac{1}{2} \|f''\|_\infty \Var_x{V_{n,\rho}^{(k)}} .
\end{eqnarray}
\end{theorem}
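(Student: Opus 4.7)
The plan is to treat all three inequalities as direct applications of the abstract machinery developed in Section~\ref{sec2}, since the operators $V_{n,\rho}^{(k)}$ have been explicitly placed into the framework of (\ref{eq2-2}) via (\ref{eq3-2}), with associated functionals $A_{n,\rho,j}^{(k)}$, barycenters $b_{n,\rho,j}^{(k)}$ computed in (\ref{eq3-4}), and variances computed in (\ref{eq3-8}).

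For (\ref{eq3-10}), I would simply invoke the general bound (\ref{eq2-10}) applied to the positive linear functional $A_{n,\rho,j}^{(k)}$, which by construction satisfies $A_{n,\rho,j}^{(k)}(e_0)=1$ (this is what is being enforced by the normalization in (\ref{eq3-1})). The barycenter is $b_{n,\rho,j}^{(k)} = A_{n,\rho,j}^{(k)}(e_1) = \frac{(2j+k)\rho+k}{2(n\rho-kc)}$ from (\ref{eq3-4}), so (\ref{eq2-10}) yields exactly (\ref{eq3-10}).

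For (\ref{eq3-11}), I would apply (\ref{eq2-11}) with $L=V_{n,\rho}^{(k)}$ and $D=D_{n,\rho}^{(k)}$, recalling that by the definition $E_{n,\rho}^{(k)}:=E(V_{n,\rho}^{(k)})$ adopted just before (\ref{eq3-9}) the quantity on the right is precisely the one appearing in (\ref{eq2-11}). Nothing extra is needed beyond verifying that the discrete operator $D_{n,\rho}^{(k)}$ in (\ref{eq3-5}) is exactly the one associated to $V_{n,\rho}^{(k)}$ through (\ref{eq2-4}), which is built into its definition.

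For (\ref{eq3-12}), the key observation is that for each fixed $x\in I_c$ the map $f\mapsto V_{n,\rho}^{(k)}f(x)$ is itself a positive linear functional with $V_{n,\rho}^{(k)}e_0(x)=1$, so the abstract inequality (\ref{eq2-10}) applies to it as well, with barycenter $V_{n,\rho}^{(k)}e_1(x)$ and variance $\Var_x V_{n,\rho}^{(k)}$. The only calculation I would need to carry out is to verify that the barycenter $V_{n,\rho}^{(k)}e_1(x)$, obtained from (\ref{moexeq4}) and the normalization (\ref{eq3-1}), simplifies to $\frac{2(n+kc)\rho x+k(\rho+1)}{2(n\rho-kc)}$; this is a short algebraic rearrangement. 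There is no real obstacle in the proof; the work has already been done, first in the Taylor argument leading to (\ref{eq2-10}), and second in the explicit evaluation of moments (\ref{eq3-4}), (\ref{eq3-7}), (\ref{eq3-8}). The theorem is essentially a summary that fits $V_{n,\rho}^{(k)}$, $A_{n,\rho,j}^{(k)}$, and $D_{n,\rho}^{(k)}$ into the general scheme.
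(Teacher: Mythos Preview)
Your proposal is correct and matches the paper's own proof almost verbatim: the paper simply says to apply (\ref{eq2-10}) to the functionals $A_{n,\rho,j}^{(k)}$ and $f\mapsto V_{n,\rho}^{(k)}f(x)$, and (\ref{eq2-11}) to the operator $V_{n,\rho}^{(k)}$. Your additional remark about checking the barycenter $V_{n,\rho}^{(k)}e_1(x)$ from (\ref{moexeq4}) and (\ref{eq3-1}) is a harmless elaboration of what the paper leaves implicit.
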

\begin{proof}
It suffices to apply (\ref{eq2-10}) and  (\ref{eq2-11}) to the functionals 
$A_{n,\rho ,j}^{(k)}$ and $f \mapsto V_{n,\rho}^{(k)} f(x)$, respectively to the operators $V_{n,\rho}^{(k)}$.
\end{proof}
The case $\rho \to \infty$ deserves a special attention.
\begin{theorem}
\label{thm3-2}
\begin{itemize}
\item[(i)] If $\|f''\|_\infty < \infty$, then
		\begin{equation}
		\label{eq3-14}
		\left \| V_{n,\infty}^{(k)} f - D_{n,\infty}^{(k)} f \right \|_\infty 
		\leq
		\frac{k}{24n^2}  \|f''\|_\infty .
		\end{equation}
\item[(ii)] For an arbitrary $f$ in the domain of $V_{n,\infty}^{(k)}$,
		\begin{equation}
		\label{eq3-15}
		\left \| V_{n,\infty}^{(k)} f - D_{n,\infty}^{(k)} f \right \|_\infty 
		\leq
		\omega \left ( f; \frac{k}{2n} \right ) .
		\end{equation}		
\end{itemize}
\end{theorem}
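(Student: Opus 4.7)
The plan is to dispose of (i) as an immediate consequence of the machinery built in Section~\ref{sec2}, and to derive (ii) by identifying the limiting functional $A_{n,\infty,j}^{(k)}$ as a probability measure supported on a short interval centred at its barycenter.

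For part (i), I would apply Theorem~\ref{thm3-1}, specifically inequality (\ref{eq3-11}), with $\rho = \infty$. The quantity $E_{n,\infty}^{(k)}(x) = k/(12n^2)$ is given in (\ref{eq3-b}) and is independent of $x$. Taking the supremum over $x \in I_c$ in (\ref{eq3-11}) immediately yields $\|V_{n,\infty}^{(k)} f - D_{n,\infty}^{(k)} f\|_\infty \le \tfrac{1}{2}\|f''\|_\infty \cdot k/(12n^2) = (k/(24n^2))\,\|f''\|_\infty$, which is (\ref{eq3-14}).

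For part (ii), the strategy is to show that $A_{n,\infty,j}^{(k)}$ is a positive linear functional with $A_{n,\infty,j}^{(k)}(e_0)=1$ that is represented by a probability measure whose support lies inside $[j/n,\,(j+k)/n]$. Once this is granted, the barycenter $b_{n,\infty,j}^{(k)} = (2j+k)/(2n)$ obtained from (\ref{eq3-4}) in the limit $\rho \to \infty$ sits exactly at the midpoint of that interval, so every $t$ in the support satisfies $|t - b_{n,\infty,j}^{(k)}| \le k/(2n)$, and hence $|f(t) - f(b_{n,\infty,j}^{(k)})| \le \omega(f;\,k/(2n))$. By positivity, $|A_{n,\infty,j}^{(k)}(f) - f(b_{n,\infty,j}^{(k)})| = |A_{n,\infty,j}^{(k)}(f - f(b_{n,\infty,j}^{(k)})e_0)| \le A_{n,\infty,j}^{(k)}(\omega(f;k/(2n))\,e_0) = \omega(f;\,k/(2n))$. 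Multiplying by $p_{n+kc,j}(x) \ge 0$, summing in $j$, and using $\sum_j p_{n+kc,j}(x) = 1$ together with (\ref{eq3-2}) and (\ref{eq3-5}) gives (\ref{eq3-15}).

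The main obstacle is the support claim. Starting from (\ref{eq3-3}) one inspects the limit $\rho \to \infty$: the Baskakov basis $p_{n\rho-(k-2)c,\,j\rho+i_1+\cdots+i_k+k-1}(t)$ concentrates at $t \approx (j+u)/n$ where $u = (i_1+\cdots+i_k)/\rho$, while the normalised multiple sum $\rho^{-k}\sum_{i_1,\ldots,i_k=0}^{\rho-1}$ converges to integration against the Irwin--Hall density $g_k$ on $[0,k]$. This should identify $A_{n,\infty,j}^{(k)}(f) = \int_0^k f((j+u)/n)\,g_k(u)\,du$, a probability measure on $[j/n,\,(j+k)/n]$ with mean $(2j+k)/(2n)$ and variance $k/(12n^2)$, matching (\ref{eq3-4}) and (\ref{eq3-a}) and providing a useful sanity check. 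To make this rigorous I would first treat the case $c=0$ by a direct computation, exploiting the identity $\Delta_{1/n}^k(I_k f)(j/n) = \int_{[0,1/n]^k} f(j/n + t_1+\cdots+t_k)\,dt_1\cdots dt_k$ together with the Leibniz-type formula for $D^k p_{n,j}$, and then extend to general $c$ via the representations (\ref{eq15a})--(\ref{eq15b}) and careful control of $p_{N,j}(t)$ as $N \to \infty$.
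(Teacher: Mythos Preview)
Your treatment of (i) coincides with the paper's: apply (\ref{eq3-11}) with the constant $E_{n,\infty}^{(k)}=k/(12n^2)$ from (\ref{eq3-b}).

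For (ii) your approach is correct but takes a different route. The paper works directly from the definition $B_{n,\infty}^{(k)}=D^k\circ B_{n,\infty}\circ I_k$ to obtain
\[
V_{n,\infty}^{(k)} f(x)=n^k\sum_{j} p_{n+kc,j}(x)\,\Delta_{1/n}^k(I_kf)\!\left(\tfrac{j}{n}\right)=k!\sum_{j}\left[\tfrac{j}{n},\tfrac{j+1}{n},\ldots,\tfrac{j+k}{n};I_kf\right]p_{n+kc,j}(x),
\]
and then invokes the mean value theorem for divided differences: since $(I_kf)^{(k)}=f$, there is an $x_{n,k,j}\in[j/n,(j+k)/n]$ with $A_{n,\infty,j}^{(k)}(f)=f(x_{n,k,j})$, and (\ref{eq3-15}) follows at once. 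Your plan instead identifies $A_{n,\infty,j}^{(k)}$ explicitly as integration against the (shifted, scaled) Irwin--Hall density supported on $[j/n,(j+k)/n]$; this is more informative (it pins down the measure, not just one point of it, and indeed recovers the variance $k/(12n^2)$), but it costs more work, whereas the paper's mean-value shortcut bypasses the limit in (\ref{eq3-3}) entirely. One simplification to your outline: the case split by $c$ is unnecessary. The finite-difference representation displayed above holds for every admissible $c$ (only the basis $p_{n+kc,j}$ carries the $c$-dependence), so the identity $\Delta_{1/n}^k(I_kf)(j/n)=\int_{[0,1/n]^k}f(j/n+t_1+\cdots+t_k)\,dt_1\cdots dt_k$ that you already quote gives the Irwin--Hall description uniformly in $c$, without passing through (\ref{eq15a})--(\ref{eq15b}).
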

\begin{proof}
(\ref{eq3-14}) follows from (\ref{eq3-12}) and (\ref{eq3-b}).
In order to prove (\ref{eq3-15}), let us remark that 
\begin{equation}
\label{eq3-16}
	D_{n,\infty}^{(k)} f(x) = \sum_{j=0}^\infty f \left ( \frac{2j+k}{2n} \right ) p_{n+kc,j} (x) ,
\end{equation}
and
\begin{eqnarray*}
	V_{n,\infty}^{(k)} f(x) 
	& = &
	\frac{n^k}{n^{c,\overline{k}}} B_{n,\infty}^{(k)} f(x)
	\\
	& = &
	n^k \sum_{j=0}^\infty p_{n+kc,j} (x) \Delta^k_{\frac{1}{n}} (I_kf)\left ( \frac{j}{n} \right )
	\\
	& = &
	k! \sum_{j=0}^\infty \left [ \frac{j}{n},  \frac{j+1}{n}, \dots , \frac{j+k}{n}; I_kf \right ]
	p_{n+kc,j} (x).
\end{eqnarray*}
According to the mean value theorem for divided differences there exists $x_{n,k,j} \in 
\left [\frac{j}{n},  \frac{j+k}{n} \right ]$ such that
$$
	k!  \left [ \frac{j}{n},  \frac{j+1}{n}, \dots , \frac{j+k}{n}; I_kf \right ] = f \left (x_{n,k,j} \right ).
$$
Consequently,
\begin{equation}
\label{eq3-17}
	V_{n,\infty}^{(k)} f(x)  = \sum_{j=0}^\infty f \left (x_{n,k,j} \right ) p_{n+kc,j} (x) .
\end{equation}
Since $x_{n,k,j} \in 
\left [\frac{j}{n},  \frac{j+k}{n} \right ]$, it follows that
$|x_{n,k,j} - \frac{2j+k}{2n}| \leq \frac{k}{2n}$, and so
\begin{eqnarray*}
	\left | (V_{n,\infty}^{(k)} f - D_{n,\infty}^{(k)} f) (x) \right |
	&\leq &
	\sum_{j=0}^\infty \left |f(x_{n,k,j} )- f \left (\frac{2j+k}{2n} \right ) \right | p_{n+kc,j} (x)
	\\
	& \leq &
	\omega \left ( f; \frac{k}{2n} \right ) , \mbox{ for all } x \geq 0.
\end{eqnarray*}
This proves (\ref{eq3-15}).
\end{proof}
Let us return to $\Var{A_{n,\rho,j}^{(k)}}$, $E_{n,\rho}^{(k)}$ and $\Var_x{V_{n,\rho}^{(k)}}$.
Their roles were illustrated in Theorem \ref{thm3-1}; some properties of them are presented in the following result.
\begin{theorem}
\label{thm3-3}
$\Var{A_{n,\rho,j}^{(k)}}$, $E_{n,\rho}^{(k)}$ and $\Var_x{V_{n,\rho}^{(k)}}$ are:
\begin{itemize}
\item[(a)] increasing with respect to $k$,
\item[(b)] increasing with respect to $c$,
\item[(c)] decreasing with respect to $\rho$.
\end{itemize}
\end{theorem}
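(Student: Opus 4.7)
The plan is to derive all nine monotonicity statements directly from the closed-form rational expressions (\ref{eq3-8}), (\ref{eq3-9}), and the displayed formula for $\Var_x V_{n,\rho}^{(k)}$ preceding Theorem~\ref{thm3-1}. All three share the common denominator $D(k,\rho,c) := 12(n\rho - kc)^2(n\rho - kc - c)$, which is strictly positive under the standing assumptions on $n,\rho,k,c$. Consequently, the sign of any partial derivative of a ratio $N/D$ is controlled by the sign of $(\partial_\xi N)\,D - N\,(\partial_\xi D)$ for $\xi \in \{k,c,\rho\}$, where $N$ denotes the respective numerator.

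First I would dispatch monotonicity in $k$. Written as a polynomial in $k$, each numerator $N$ has non-negative coefficients once one uses $n,\rho \geq 1$, $j \geq 0$ and $x(1+cx) \geq 0$ on $I_c$; hence $N$ is increasing in $k$. For $c \geq 0$ the denominator $D$ is decreasing in $k$, giving the result immediately. For $c < 0$ both $N$ and $D$ increase in $k$, and one compares $\partial_k \log N$ with $\partial_k \log D = -2c/(n\rho - kc) - c/(n\rho - kc - c)$, verifying that the former dominates termwise.

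Monotonicity in $c$ proceeds similarly: after collecting powers of $c$ and $x$, the factors requiring care are the terms of the form $x(1+cx)$ appearing in the numerators of $E_{n,\rho}^{(k)}$ and $\Var_x V_{n,\rho}^{(k)}$, and the bound $x(1+cx) \geq 0$ on $I_c$ makes $(\partial_c N)\,D - N\,(\partial_c D) \geq 0$. For monotonicity in $\rho$, each numerator is a cubic in $\rho$ whose coefficients grow more slowly than those of $D$; a direct computation of $(\partial_\rho N)\,D - N\,(\partial_\rho D)$ yields a polynomial whose coefficients, suitably grouped, are non-positive. As a sanity check, the special case $c=0$ in (\ref{eq-c}) shows that both $\Var A_{n,\rho,j}^{(k)}$ and $E_{n,\rho}^{(k)}$ split cleanly as the constant term $k/(12n^2)$ plus manifestly $\rho$-decreasing corrections, and the general case specializes correctly to this.

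The main obstacle is not conceptual but combinatorial: the numerator of $\Var_x V_{n,\rho}^{(k)}$ contains several mixed terms in $k,c,\rho,x$ that must be regrouped into a manifestly signed form, and the case $c < 0$ demands extra vigilance because the usual sign of $\partial_\xi D$ is reversed. The key inputs throughout are the elementary inequality $1 + cx \geq 0$ for $x \in I_c$ and the positivity of both $n\rho - kc$ and $n\rho - kc - c$, which keep $D > 0$ and control the problematic cross-terms.
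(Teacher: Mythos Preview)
Your treatment of (a) and (b) matches the paper's: both simply point to the explicit rational formulas (\ref{eq3-8}), (\ref{eq3-9}) and the displayed formula for $\Var_x V_{n,\rho}^{(k)}$ and read off the monotonicity. (One caution: your blanket claim that the numerators have non-negative $k$-coefficients is not quite right for $c<0$, since e.g.\ the $k^2$-coefficient $2c(\rho^2-1)$ in (\ref{eq3-8}) is then non-positive; the log-derivative comparison you propose for that case would need to absorb this, and you have not spelled out how.)

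For (c) the paper takes a genuinely different route. Rather than differentiating the quotient $N/D$ and trying to sign the polynomial $(\partial_\rho N)D - N(\partial_\rho D)$, it introduces $a:=kc/n$, $b:=(kc+c)/n$ and \emph{rewrites} each quantity as a sum of terms of the form
\[
\text{(positive constant)} \times \Big(\frac{\rho}{\rho-a}\Big)^2 \frac{1}{\rho-b}
\quad\text{or}\quad
\text{(positive constant)} \times \Big\{ \frac{\rho^2+3}{(\rho-a)^2}\,\frac{\rho}{\rho-b} + \frac{2}{(\rho-a)(\rho-b)} \Big\},
\]
each factor of which is manifestly decreasing in $\rho$ (for $a,b\ge 0$). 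This decomposition reduces the problem to checking monotonicity of a handful of one-variable building blocks, and avoids entirely the degree-$5$ polynomial in $\rho$ that your quotient-rule approach would produce.

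Your strategy for (c) is in principle valid, but the sentence ``a direct computation \ldots\ yields a polynomial whose coefficients, suitably grouped, are non-positive'' is precisely the whole content of the claim, and you have only asserted it. Since the paper's rewriting shows the result is true, your computation would eventually succeed, but as written the proposal leaves the essential step unchecked. If you pursue the brute-force route, I would suggest at least verifying it in the tractable case $c=0$ (where $D=12(n\rho)^3$ and the calculation is short) before tackling the general case; alternatively, adopt the paper's decomposition, which sidesteps the combinatorics entirely.
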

\begin{proof}
(a) and (b) follow easily from (\ref{eq3-8}), (\ref{eq3-9}) and (\ref{eq3-10}).

Now let us denote $a:=\frac{kc}{n}$ and $b:=\frac{kc+c}{n}$. Then (\ref{eq3-8}) can be rewritten as
\begin{eqnarray*}
	\Var{A_{n,\rho,j}^{(k)}}
	& = &
	\frac{1}{6n^3}
	\left \{ 3 [k(2jc+n)+2j(jc+n)] + k^2c \right \} \left ( \frac{\rho}{\rho-a} \right )^2 \frac{1}{\rho-b}
	\\
	& &
	+ \frac{k}{12n^2} \left \{ \frac{\rho^2+3}{(\rho-a)^2} \frac{\rho}{\rho-b} +
	\frac{2}{\rho-a} \frac{1}{\rho-b}  \right \} ,
\end{eqnarray*}
and this shows that $\Var{A_{n,\rho,j}^{(k)}}$ is decreasing w.r.t. $\rho$.

A similar transformation of (\ref{eq3-9}) (or an inspection of (\ref{eq2-7})) reveals that 
$E_{n,\rho}^{(k)}$ is also decreasing w.r.t. $\rho$.

Finally (\ref{eq3-10}) can be put under the form 
\begin{eqnarray*}
	\Var_x{V_{n,\rho}^{(k)}}
	& = &
	\frac{x(1+cx)(n+kc)}{n^2} \left ( \frac{\rho}{\rho-a} \right )^2 \frac{\rho+1}{\rho-b}
	+ \frac{kc+3n}{6n^3} \left ( \frac{\rho}{\rho-a} \right )^2 \frac{1}{\rho-b}
	\\
	& &
	+\frac{k}{12n^2} \left \{ \frac{\rho^2+3}{(\rho-a)^2}  \frac{\rho}{\rho-b}
	+\frac{2}{\rho-a} \frac{1}{\rho-b} \right \} ,
\end{eqnarray*}
which shows that $\Var_x{V_{n,\rho}^{(k)}}$ is decreasing w.r.t. $\rho$.
\end{proof}
\section{Some complementary results}
\label{sec4}
The starting point for this article was the following remark.

Let $n \in \mathbb{N}$, $c=-1$, $x \in [0,1]$, $1 \leq k < n$, $f \in C[0,1]$.
Then (\ref{eq3-17}) takes the form
\begin{equation}
\label{eq4-1}
	V_{n,\infty}^{(k)} f(x) = \sum_{j=0}^{n-k} f(x_{n,k,j}) p_{n-k,j} (x),
\end{equation}
for suitable $x_{n,k,j} \in \left [ \frac{j}{n},\frac{j+k}{n} \right ]$.
In (\ref{eq4-1}) $p_{n-k,j} (x)$ are the very classical Bernstein fundamental polynomials, i.~e.,
$$
	p_{n-k,j}(x) := {n-k \choose j } x^j (1-x)^{n-k-j}, \, x \in [0,1].
$$
The classical Bernstein polynomials are, of course,
\begin{equation}
\label{eq4-2}
	B_{n-k} f(x) :=\sum_{j=0}^{n-k} f \left ( \frac{j}{n-k} \right ) p_{n-k,j} (x).
\end{equation}
Both $x_{n,k,j}$ and $\frac{j}{n-k}$ are in $\left [ \frac{j}{n},\frac{j+k}{n} \right ]$, so
that $|x_{n,k,j}- \frac{j}{n-k}| \leq \frac{k}{n}$. Now from (\ref{eq4-1}) and (\ref{eq4-2}) we get
\begin{equation}
\label{eq4-3}
	\|V_{n,\infty}^{( k)} - B_{n-k} f\|_\infty \leq \omega \left ( f; \frac{k}{n} \right ) .
\end{equation}
In particular, $V_{n,\infty}^{( 1)} $ is the classical Kantorovich operator $K_{n-1}$ on $C[0,1]$.
So (\ref{eq4-3}) implies
$$
	\| K_{n-1} f - B_{n-1}f \|_\infty \leq \omega \left ( f; \frac{1}{n} \right ) , \, f \in C[0,1].
$$
Let us return to an arbitrary $c$ and to 
$$
		B_{n,\infty} f(x) :=\sum_{j=0}^{\infty} p_{n,j} (x) f \left ( \frac{j}{n} \right ).
$$
The sum $S_n(x) = S_{n,c} (x) := \sum_{j=0}^\infty p_{n,j}^2 (x) $ was investigated in a series of papers, see \cite{Rasa2015,Rasatbp} and the references given there.

It is known that it is logarithmically convex (see \cite{Rasatbp}) and
\begin{eqnarray*}
	S_{n,c} (x) 
	& = &
	\frac{1}{\pi} \int_0^1 \left ( t+(1-t)(1+2cx)^2 \right )^{-\frac{n}{c}} \frac{dt}{\sqrt{t(1-t)}}, \,
	c \not=0,
	\\
	S_{n,0} (x) 
	& = &
	\frac{1}{\pi} \int_{-1}^1 e^{-2nx(1+t)} \frac{dt}{\sqrt{1-t^2}}, \,
	\\
	S_{n,c} (x) 
	& \leq &
	(4(n+c)x(1+cx)+1)^{-\frac{n}{2(n+c)}} .
\end{eqnarray*}
Moreover (see \cite{Rasa2015,GonskaRasaRusu2014}),
$$
	|B_{n,\infty} (fg) (x) - B_{n,\infty} f(x)B_{n,\infty} g(x) |
	\leq \frac{1}{2} (1-S_{n,c}(x)) \osc_n{(f)} \osc_n{(g}),
$$
where $\osc_n{(f)} := \sup{\{|f(\frac{j}{n}) - f(\frac{i}{n}) |: i,j \in \mathbb{N}_0 \}}$.

Let $n$ and $x$ be fixed; consider the functional $ f \mapsto B_{n,\infty} f(x) $. Then, roughly speaking $1-S_{n,c} (x) $ shows how far is the functional from being a multiplicative functional, and ultimately how far is it from being a point evaluation.

Consider $(p_{n,0} (x), p_{n,1} (x),  \dots)$ as a probability distribution parameterized by $x$.
Then $1-S_{n,c}(x)$ is the associated Tsallis entropy.
Moreover, $S_n(x)$ is viewed as one of the indices measuring the inequality and diversity, i.~e., the degree of uniformness of the distribution (see \cite[pp. 556-559]{MarshallOlkinArnold2011}).

Margareta Heilmann\\
School of Mathematics and Natural Sciences
\\
University of Wuppertal
\\
Gau{\ss}stra{\ss}e 20
\\
D-42119 Wuppertal, Germany
\\
email: {heilmann@math.uni-wuppertal.de}
\\[5pt]
Fadel Nasaireh 
\\
Department of Mathematics
\\
Technical University
\\
Str. Memorandumului 28
\\
RO-400114 Cluj-Napoca,
Romania,
\\
email: {fadelnasierh@gmail.com}
\\[5pt]
Ioan Ra\c{s}a
\\
Department of Mathematics
\\
Technical University
\\
Str. Memorandumului 28
\\
RO-400114 Cluj-Napoca,
Romania
\\
email: {Ioan.Rasa@math.utcluj.ro}

\begin{thebibliography}{99}

%

\bibitem{GonskaHeilmannRasa2011}
H.~Gonska, M.~Heilmann, I.~Ra\c sa,
Kantorovich operators of order $k$,
Numer. Funct. Anal. Optimiz. 32,  (2011), 717-738.

\bibitem{GonskaPaltanea2010-1}
H.~Gonska, R.~P\u {a}lt\u {a}nea, 
Simultaneous approximation by a class of Bernstein-Durrmeyer operators preserving linear functions, Czechoslovak Math. J.  60 (135), (2010), 783-799.

\bibitem{GonskaRasaRusu2014}
H.~Gonska, I.~ Ra\c{s}a, M.~Rusu, Chebyshev-Gr\"uss-type inequalities via discrete oscillations, Bul. Acad. Stiinte Repub. Mold. Mat. no.1 (74) (2014), 63-89.
https://arxiv.org/abs/1401.7908.





\bibitem{HeilmannRasa2014}
M.~Heilmann, I.~Ra\c sa, 
$k$-th order Kantorovich type modification of the operators $U_n^\rho$,
J. Appl. Funct. Anal. 9 (3-4),  (2014), 320-334.

\bibitem{HeilmannRasa2015}
M.~Heilmann, I.~ Ra\c{s}a,
$k$-th order Kantorovich modification of linking Baskakov type operators, 
in: Recent Trends in Mathematical Analysis and its Applications, Proceedings of the Conference ICRTMAA 2014, Rorkee, India, December 2014, (ed. P.~N.~Agrawal et al.), Proceedings in Mathematics \& Statistics 143, 229-242.

\bibitem{HeilmannRasa2017}
M.~Heilmann, I.~ Ra\c{s}a,
A nice representation for a link between Bernstein-Durrmeyer and Kantorovich operators,
in  Proceedings of ICMC 2017, Haldia, India, January 2017, (ed. D. Giri et. al.), Springer Proceedings in Mathematics \& Statistics, 312-320.

\bibitem{HeilmannRasa2018}
M.~Heilmann, I.~ Ra\c{s}a,
A nice representation for a link between Baskakov- and Sz\'{a}sz-Mirakjan-Durrmeyer operators and their Kantorovich variants, submitted.


\bibitem{MarshallOlkinArnold2011}
A.~W.~Marshall, I.~Olkin, B.~C.~Arnold, Inequalities: Theory of Majorization and Its Applications, second edition, Spronger Series in Statistics, Springer, New York, 2011.


\bibitem{Paltanea2007}
R.~P\u {a}lt\u {a}nea, A class of Durrmeyer type operators preserving linear functions,
Ann. Tiberiu Popoviciu Sem. Funct. Eq. Approx. Conv. (Cluj-Napoca) {\bf 5} (2007), 109-117.

\bibitem{Paltanea2008}
R.~P\u {a}lt\u {a}nea, Modified Sz\'{a}sz-Mirakjan operators of integral form, Carpathian J. Math., 24(3) (2008), 378-385.



\bibitem{Rasa2011}
I.~ Ra\c{s}a, Discrete operators associated with certain integral 
operators, Stud. Univ. Babes-Bolyai Math. 56 (2011), No. 2, 537–544.

\bibitem{Rasa2015}
I.~ Ra\c{s}a, Entropies and Heun functions associated with positive linear operators, Appl. Math. Comput. 268 (2015), 422-431.

\bibitem{Rasatbp}
I.~ Ra\c{s}a, Convexity properties of some entropies, Results Math. (2018), 73:105 (online first).







\end{thebibliography}
\end{document}